\pgfplotsset{compat=1.11}
\numberwithin{equation}{section}
\theoremstyle{definition}
\newtheorem{remark}{Remark}
\newtheorem*{example}{Example}
\newtheorem{claim}{Claim}
\newtheorem{theorem}{Theorem}
\newtheorem*{main theorem}{Main Theorem}
\newtheorem*{theorem*}{Theorem}
\begin{document}
\title{Generalized Fiber Contraction Mapping Principle}
\author{Alexandro Luna and Weiran Yang}
\date{}
\maketitle

\begin{abstract}
We prove a generalized non-stationary version of the fiber contraction mapping theorem. It was originally used in \cite{hp} to prove that the stable foliation of a $C^2$ Anosov diffeomorphism of a surface is $C^1$. Our generalized principle is used in \cite{Lu2}, where an analogous regularity result for stable foliations of non-stationary systems is proved. The result is stated in a general setting so that it may be used in future dynamical results in the random and non-stationary settings, especially for graph transform arguments. 
\end{abstract}

\section*{Introduction}

The contraction mapping principle is a classical result in mathematical analysis that has an numerous applications in the theory of iterated function systems, Newton's method, the Inverse and Implicit Function Theorems, ordinary and partial differential equations, and more (see \cite{BS}, and references therein, for a survey of applications). Many versions of this principle and converses have been studied and examined in different spaces. A detailed historical note of this theorem can be found in \cite{JJT}. 

This principle is frequently used in various areas of dynamical systems, especially in smooth dynamics. Since the early 1970s, it has been used in graph transform arguments to prove various existence and regularity results of stable foliations of hyperbolic systems \cite{hp, HPS}.

Recently, hyperbolic dynamics have been used in studying the so-called trace maps \cite{C, Ca, DG1}. Understanding the dynamical behavior of these maps is a useful tool in deriving spectral properties of discrete Schr\"odinger operators with Sturmian potential \cite{bist, D2000, degt, dg3, DGY, GJK, L, M}. One promising approach to advance these results is to further develop the theory in the non-stationary case. 

In the random and non-stationary settings, existence and smoothness of stable manifolds is well understood (see for example Chapter 7 of \cite{Ar}). In the non-stationary or non-autonomous settings, questions regarding dynamical properties of Anosov families such as existence of stable manifolds \cite{Mu}, openness in the space of two-sided sequences of diffeomorphisms \cite{Mu1}, and structural stability \cite{CRV, Mu2} have been addressed. When it comes to regularity of non-stationary stable foliations, only partial results are available, such as when the sequence of maps has a constant tail \cite{S} or for a neighborhood of a common fixed point of the maps \cite{ZLZ}, but our overall goal is to derive regularity results of these foliations, currently not available in the literature. Our primary motivation comes from questions on spectral properties of Sturmian Hamiltonians. 

This note is dedicated to providing the preliminary technical contraction mapping principles that will be useful in these non-stationary settings. 
In \cite{Lu2}, it is proved that the non-stationary stable foliation of a collection of diffeomorphisms of $\mathbb T^2$ that satisfy a common cone condition, and have uniformly bounded $C^2$ norms, is a $C^1$ foliation of $\mathbb T^2$. This result generalizes the classical version in \cite{hp} where it is proved that the stable foliation of a $C^2$ Anosov diffeomorphism of a surface is $C^1$. In \cite{hp}, a fibered version of the contraction mapping principle is used to prove this $C^1$ smoothness, and this paper is dedicated to supplying the appropriate generalized version of this principle to be applicable in \cite{Lu2} and future analogous results.  

Given complete metric spaces $X$ and $Y$, we consider a sequence of maps $(f_n)$, $f_n:X\rightarrow X$, and for each $x\in X$, a sequence of maps $\left(h_n^x\right)$, $h_n^x:Y\rightarrow Y$. Given a sequence of skew maps $(F_n)$ via 
$$F_n:X\times Y\rightarrow X\times Y, (x,y)\mapsto \left(f_n(x), h_n^x(y)\right),$$
we show that under uniform contraction rates and reasonable continuity and bounded orbit assumptions that there is a $\left(x^*, y^*\right)\in X\times Y$ such that 
$$\lim\limits_{n\rightarrow\infty} F_1\circ \cdots \circ F_n(x,y)=\left(x^*, y^*\right)$$
for all $(x,y)\in X\times Y$.
Outside of its direct application in \cite{Lu2}, this result has the potential to be used for various dynamical techniques in the random or non-stationary settings. This paper is a result of an undergraduate research project, supervised by the first author, that occurred during the Summer and Fall quarters of 2024.
\subsection*{Background and Main Results}
Given a metric space $(X,d)$ and a mapping $f:X\rightarrow X$, we define the \textit{Lipschitz constant} of $f$ to be 
$$\text{Lip}(f):=\sup_{x_1\neq x_2} \frac{d\left(f(x_1), f(x_2)\right)}{d(x_1,x_2)}.$$
If $\text{Lip}(f)<1$, then we say that $f$ is a \textit{contraction} on $X$. An element $x^*\in X$ is a \textit{fixed point} of $f$ if $$f\left(x^*\right)=x^*.$$
\begin{theorem}[Contraction Mapping Principle]
If $X$ is a complete metric space and $f:X\rightarrow X$ is a contraction on $X$, then $f$ has a unique fixed point $x^*\in X$ and moreover, 
$$\lim\limits_{n\rightarrow\infty} f^n(x)=x^*$$
for all $x\in X$.
\end{theorem}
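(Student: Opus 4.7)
The plan is to fix an arbitrary starting point $x_0 \in X$, form the orbit $x_n := f^n(x_0)$, and show that this orbit is Cauchy; completeness of $X$ then furnishes a candidate limit, and the Lipschitz (hence continuous) nature of $f$ will promote that limit to a fixed point. Let $\lambda := \mathrm{Lip}(f) < 1$. First I would observe that $d(x_{n+1},x_n) = d(f(x_n),f(x_{n-1})) \le \lambda\, d(x_n,x_{n-1})$, and iterate to get $d(x_{n+1},x_n) \le \lambda^n d(x_1,x_0)$. A telescoping application of the triangle inequality combined with the geometric series then gives, for $m > n$,
\[
d(x_m, x_n) \le \sum_{k=n}^{m-1} \lambda^k\, d(x_1,x_0) \le \frac{\lambda^n}{1-\lambda}\, d(x_1,x_0),
\]
which tends to $0$ as $n \to \infty$. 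So $(x_n)$ is Cauchy, and by completeness of $X$ there exists $x^* \in X$ with $x_n \to x^*$.

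Next, I would argue that $x^*$ is a fixed point: since $f$ is $\lambda$-Lipschitz it is continuous, so passing to the limit in $x_{n+1} = f(x_n)$ yields $f(x^*) = x^*$. For uniqueness, suppose $y^*$ is another fixed point; then
\[
d(x^*, y^*) = d(f(x^*), f(y^*)) \le \lambda\, d(x^*, y^*),
\]
and since $\lambda < 1$ this forces $d(x^*, y^*) = 0$, i.e., $y^* = x^*$.

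Finally, to establish the global convergence $f^n(x) \to x^*$ for every $x \in X$ (not just for our chosen $x_0$), I would use the fixed point property together with the contraction estimate
\[
d(f^n(x), x^*) = d(f^n(x), f^n(x^*)) \le \lambda^n\, d(x, x^*),
\]
and the right-hand side goes to $0$ geometrically. There is no real obstacle in this argument; the only step requiring a moment's thought is the Cauchy estimate, where one must sum the geometric tail rather than simply bound consecutive differences. The whole proof is quite short and self-contained once the correct telescoping bound is in place.
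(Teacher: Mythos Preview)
Your proof is correct and is exactly the standard argument for the Banach fixed point theorem. Note, however, that the paper does not actually supply a proof of this statement: Theorem~1 is stated as classical background and left unproved, so there is no ``paper's own proof'' to compare against. If one wants a surrogate comparison, the paper's proof of Theorem~\ref{base lemma} (the non-stationary version) follows the same template you use---a telescoping triangle-inequality estimate summed against a geometric series to get a Cauchy sequence, followed by a contraction estimate to show the limit is independent of the starting point---and specializes to your argument when all $f_n$ coincide.
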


Our goal is to generalize the following fibered version of this principle. 

\begin{theorem}[Fiber Contraction Principle \cite{hp}]\label{Fiber Contraction Theorem}
    Let $X$ be a space, $Y$ be a metric space, $f:X\rightarrow X$ a mapping, and $\{g_x\}_{x\in X}$ a family of maps $g_x:Y\rightarrow Y$ such that 
    $$F:X\times Y\rightarrow X\times Y, \ (x,y)\mapsto \left( f(x), g_x(y)\right)$$
    is continuous. Suppose that $p\in X$ is a fixed point of $f$ satisfying $\lim\limits_{n\rightarrow\infty}f^n(x)=p$
    for all $x\in X$, $q\in Y$ is a fixed point of $g_p$, and 
    $$\limsup\limits_{n\rightarrow\infty} \text{Lip} \left(g_{f^n(x)}\right)<1$$
    for all $x\in X$. Then, $(p,q)\in X\times Y$ is a fixed point of $F$ satisfying 
    $$\lim\limits_{n\rightarrow\infty}F^n(x,y)=(p,q),$$
    for all $(x,y)\in X\times Y$.
\end{theorem}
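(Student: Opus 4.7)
The statement has two parts: that $(p,q)$ is a fixed point of $F$, and that all forward orbits converge to it. The first is immediate, since $F(p,q)=(f(p),g_p(q))=(p,q)$. The substantive content is the convergence claim, which I would prove by treating the two coordinates separately. Writing $F^n(x,y)=(f^n(x),y_n)$ with $y_0=y$ and $y_{n+1}=g_{f^n(x)}(y_n)$, the first coordinate converges to $p$ by hypothesis, so it suffices to show $y_n\to q$ in $Y$.

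The main step is to derive a recursive inequality for $a_n:=d(y_n,q)$. Set $x_n:=f^n(x)$. Using $q=g_p(q)$ and the triangle inequality,
$$a_{n+1} \leq d\left(g_{x_n}(y_n), g_{x_n}(q)\right) + d\left(g_{x_n}(q), g_p(q)\right) \leq \text{Lip}\left(g_{x_n}\right)\cdot a_n + \epsilon_n,$$
where $\epsilon_n := d\left(g_{x_n}(q), g_p(q)\right)$. Since $x_n\to p$ and the continuity of $F$ implies that $x\mapsto g_x(q)$ is continuous, we get $\epsilon_n\to 0$.

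Next, I would invoke the limsup hypothesis to select $\lambda\in(0,1)$ and an integer $N$ such that $\text{Lip}\left(g_{x_n}\right)\leq\lambda$ for all $n\geq N$. Iterating the recursion from index $N$ yields, for $n\geq N$,
$$a_{n+1} \leq \lambda^{n+1-N}\, a_N \; + \; \sum_{k=N}^{n}\lambda^{n-k}\epsilon_k.$$
The first term decays geometrically. The second is the standard convolution of a geometric sequence with a null sequence: given $\delta>0$, pick $M\geq N$ such that $\epsilon_k<\delta(1-\lambda)$ for $k\geq M$; the tail sum is then bounded by $\delta$, while the initial segment is multiplied by $\lambda^{n-M}\to 0$. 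Letting $\delta\to 0$ gives $a_n\to 0$.

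The only genuine subtlety, and the likely main obstacle, is the inhomogeneous nature of the recursion: the base point of the fiber contraction changes at each step, so the pure geometric decay available from a single contraction must be carefully paired with the continuity-driven decay of the perturbation terms $\epsilon_n$. The remainder of the argument is essentially bookkeeping of the two decay rates.
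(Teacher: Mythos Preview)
Your argument is correct and is essentially the classical proof of the Hirsch--Pugh fiber contraction theorem: split off the first coordinate, derive the inhomogeneous recursion $a_{n+1}\le \mathrm{Lip}(g_{x_n})\,a_n+\epsilon_n$ for $a_n=d(y_n,q)$, and finish with the geometric-convolution estimate. The one point worth making explicit is that $a_N$ is automatically finite (it is a distance in $Y$), so no Lipschitz control is needed on the first $N$ fiber maps; you handle this implicitly by starting the iteration at index $N$.

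That said, the paper does not give its own proof of this statement: Theorem~\ref{Fiber Contraction Theorem} is quoted from \cite{hp} as background, with no argument supplied. The paper's original proofs are for the non-stationary versions, Theorems~\ref{base lemma} and~\ref{main theorem}. There is still an interesting comparison to draw. Your method relies on having the target point $q$ in hand from the outset, which lets you measure $d(y_n,q)$ directly. In the non-stationary setting of Theorem~\ref{main theorem} no such fixed point exists a priori, so the paper instead shows the $Y$-coordinates form a Cauchy sequence via a telescoping decomposition (the quantities $A_j,B_j,C_j$ in Claims~\ref{existence of limit} and~\ref{limit independent of x}), deferring identification of the limit until after convergence is established. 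Your approach is shorter and more transparent when $q$ is given, but it does not transfer to the non-stationary case; the paper's Cauchy-sequence route is what the generalization forces, and would of course also specialize to reprove the classical result you have addressed.
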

The following theorem is a non-stationary version of the Contraction Mapping Principle. 
\begin{theorem}\label{base lemma}
   Let $(X,d)$ be a complete metric space and $(f_n)$, $f_n:X\rightarrow X$, a sequence of contractions. If 
    $$\mu:=\sup_{n\in\mathbb N}\text{Lip}(f_n)<1$$
    and there is a $x_0\in X$ such that $\left(d(f_n(x_0), x_0)\right)$ is bounded, then there is a $x^*\in X$ such that 
    $$\lim\limits_{n\rightarrow\infty}f_1\circ \cdots \circ f_n(x)=x^*$$
    for all $x\in X$.
\end{theorem}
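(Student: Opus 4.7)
The plan is to show that the sequence $F_n(x_0):=f_1\circ\cdots\circ f_n(x_0)$ is Cauchy in $X$, define $x^*$ as its limit, and then use the uniform contraction rate to transfer convergence from $x_0$ to an arbitrary starting point $x\in X$.

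First I would observe that for any $x,y\in X$ and any $n$, the composition $f_1\circ\cdots\circ f_n$ is itself Lipschitz with constant at most $\mu^n$, since Lipschitz constants multiply under composition and each $f_k$ has $\mathrm{Lip}(f_k)\le\mu$. In particular $d(F_n(x),F_n(y))\le\mu^n d(x,y)$, so once I have convergence on one orbit I get convergence on every orbit to the same limit. This reduces the problem to producing the limit point from the single distinguished orbit starting at $x_0$.

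Next I would estimate consecutive differences along this orbit. Let $M$ be a bound for $d(f_n(x_0),x_0)$, which exists by hypothesis. Then
\[
d\bigl(F_n(x_0),F_{n+1}(x_0)\bigr)
= d\bigl(f_1\!\circ\!\cdots\!\circ\! f_n(x_0),\; f_1\!\circ\!\cdots\!\circ\! f_n(f_{n+1}(x_0))\bigr)
\le \mu^{n} d\bigl(x_0,f_{n+1}(x_0)\bigr) \le M\mu^{n}.
\]
Summing a geometric tail with ratio $\mu<1$ gives $d(F_n(x_0),F_m(x_0))\le M\mu^n/(1-\mu)$ for $m>n$, which tends to $0$; hence $(F_n(x_0))$ is Cauchy and converges by completeness of $X$ to some $x^*$.

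Finally, for an arbitrary $x\in X$, the triangle inequality together with the contraction estimate from step one yields
\[
d(F_n(x),x^*)\le d(F_n(x),F_n(x_0))+d(F_n(x_0),x^*)\le \mu^n d(x,x_0)+d(F_n(x_0),x^*),
\]
and both terms vanish as $n\to\infty$. I do not anticipate a real obstacle here: the only subtle point is recognizing that one should peel off the \emph{innermost} map $f_{n+1}$ rather than the outermost $f_1$ when comparing $F_n$ and $F_{n+1}$, so that the contraction factor $\mu^n$ is applied to a quantity ($d(x_0,f_{n+1}(x_0))$) that is controlled by the bounded-orbit hypothesis.
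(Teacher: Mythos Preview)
Your proof is correct and follows essentially the same approach as the paper: both show $(f_1\circ\cdots\circ f_n(x_0))$ is Cauchy by combining the $\mu^n$-Lipschitz bound for the first $n$ maps with the bound $M$ on $d(f_k(x_0),x_0)$, and then transfer convergence to arbitrary $x$ via $d(F_n(x),F_n(x_0))\le\mu^n d(x,x_0)$. The only cosmetic difference is that you bound consecutive differences $d(F_n(x_0),F_{n+1}(x_0))\le M\mu^n$ and then sum, whereas the paper first pulls out $\mu^n$ from $d(F_n(x_0),F_m(x_0))$ and telescopes the remaining factor $d(f_{n+1}\circ\cdots\circ f_m(x_0),x_0)$; unwinding either computation yields the same geometric series.
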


The next theorem is a non-stationary version of the Fiber Contraction Mapping Principle. 

\begin{theorem}\label{main theorem}
     Let $X$ and $Y$ be complete metric space. Suppose that $(f_n)$, $f_n:X\rightarrow X$, is a sequence of mappings such that 
     \begin{equation}\label{base contraction rate}
         \mu := \sup\limits_{n\in\mathbb N}\text{Lip}\left(f_n\right)<1.
     \end{equation}
     For each $x\in X$, let $\left(h_n^x\right)$, $h_n^x:Y\rightarrow Y$, be a sequence of mappings such that 
 \begin{equation}\label{fiber contraction rate}
     \lambda := \sup_{n\in\mathbb N}\sup_{x\in X}\text{Lip}\left(h_n^x\right)<1.
 \end{equation}
 Suppose that
    \begin{itemize}
        \item[(1)] There is a $x_0\in X$ such that $\left\{f_n(x_0)\right\}_{n\in\mathbb N}$ is bounded in $X$;
        \item[(2)] For any bounded set $\Omega\subset X\times Y$, the set $\left\{h_n^x(y)\right\}_{(x,y)\in \Omega, n\in\mathbb N}$ is bounded in $Y$;
        \item[(3)] For any bounded set $K\subset Y$ and any $n\in\mathbb N$, we have 
    $\lim\limits_{\left|x-x'\right|\rightarrow0}d_Y\left(h_n^x(y), h_n^{x'}(y)\right)=0$, and this limit is uniform  in $y\in K$.
    \end{itemize}
  Then, for the skew-maps $F_n:X\times Y\rightarrow X\times Y$, defined via $F_n(x,y):=\left(f_n(x), h_n^x(y)\right)$, there is a $\left(x^*,y^*\right)\in X\times Y$ such that 
    $$\lim\limits_{n\rightarrow \infty}F_1\circ\cdots\circ F_n(x,y)=\left(x^*,y^*\right)$$
    for all $(x,y)\in X\times Y$.
\end{theorem}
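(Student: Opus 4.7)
By Theorem~\ref{base lemma} applied to $(f_n)$, the $X$-coordinate of $F_1\circ\cdots\circ F_n(x,y)$, namely $f_1\circ\cdots\circ f_n(x)$, converges to some $x^*\in X$ independent of $x$. An easy induction on $n$, using $G_n:=F_1\circ\cdots\circ F_n$ and the identity $G_{n+1}=G_n\circ F_{n+1}$, identifies the $Y$-coordinate as
$$\eta_n(x,y)=h_1^{b_1^n(x)}\circ h_2^{b_2^n(x)}\circ\cdots\circ h_{n-1}^{b_{n-1}^n(x)}\circ h_n^x(y),\qquad b_k^n(x):=f_{k+1}\circ\cdots\circ f_n(x).$$
As a composition of $\lambda$-contractions in the fiber, $\eta_n(x,\cdot)$ is $\lambda^n$-Lipschitz, so variation in $y$ decays geometrically and the real task is to control the dependence of $\eta_n$ on the base point $x$.

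The first step is a \emph{uniform boundedness} lemma. Fixing $(x_0,y_0)\in X\times Y$, conditions~(1), (2) and the rates $\mu,\lambda<1$ yield, via recursions of the form $d_X(\xi_{k+1},x_0)\le\mu\,d_X(\xi_k,x_0)+C$ and $d_Y(\zeta_{k+1},y_0)\le\lambda\,d_Y(\zeta_k,y_0)+C_Y$, that the forward iterates $F_{n+1}\circ\cdots\circ F_{n+m}(x_0,y_0)$ lie in a bounded set $\Omega\subset X\times Y$ uniformly in $n,m\ge 0$, and that the intermediate tails $u_k:=h_{k+1}^{b_{k+1}^n(x)}\circ\cdots\circ h_n^x(y)$ lie in a bounded set $K\subset Y$ uniformly in $n\ge k$. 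Writing $\eta_n(x,y)=A_1\circ\cdots\circ A_n(y)$ and $\eta_n(x',y)=A_1'\circ\cdots\circ A_n'(y)$ with $A_k:=h_k^{b_k^n(x)}$, $A_k':=h_k^{b_k^n(x')}$, and telescoping through the hybrids $W_k:=A_1'\circ\cdots\circ A_k'\circ A_{k+1}\circ\cdots\circ A_n$, the $\lambda^{k-1}$-contraction of the outer prefix produces the \emph{sensitivity estimate}
$$d_Y(\eta_n(x,y),\eta_n(x',y))\le\sum_{k=1}^{n}\lambda^{k-1}\,d_Y\bigl(h_k^{b_k^n(x)}(u_k),h_k^{b_k^n(x')}(u_k)\bigr).$$
Since each $f_j$ is a $\mu$-contraction, $d_X(b_k^n(x),b_k^n(x'))\le\mu^{n-k}d_X(x,x')\to 0$ as $n\to\infty$ for fixed $k$, and condition~(3) with $u_k\in K$ bounded then forces each summand to $0$; the uniform bound $M:=\operatorname{diam}(K)$ controls the tail.

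A \emph{split-sum argument} shows the sensitivity bound tends to $0$ as $n\to\infty$ uniformly over $d_X(x,x')$ in any bounded range: given $\epsilon>0$, choose $K_0$ so that $M\lambda^{K_0}/(1-\lambda)<\epsilon/2$, then $N$ so that each of the first $K_0$ summands is less than $\epsilon/(2K_0)$ for all $n\ge N$. Applying this with $(x',y'):=F_{n+1}\circ\cdots\circ F_{n+m}(x_0,y_0)\in\Omega$, and combining with the $\lambda^n$-decay in $y$, yields $\sup_{m\ge 0}d_Y(\eta_{n+m}(x_0,y_0),\eta_n(x_0,y_0))\to 0$, so $(\eta_n(x_0,y_0))$ is Cauchy in the complete space $Y$ and converges to some $y^*$. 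The same sensitivity estimate applied to the pair $(x,x_0)$ shows $\eta_n(x,y)\to y^*$ for every $(x,y)\in X\times Y$. I expect the main obstacle to be exactly this split-sum step: condition~(3) only furnishes pointwise-in-$k$ moduli of continuity, so one must trade off the geometric decay of the fiber contractions against the geometric convergence of the base points $b_k^n(x)$ to a common limit as $n\to\infty$.
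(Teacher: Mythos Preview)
Your proposal is correct and follows essentially the same approach as the paper: both establish uniform boundedness of the intermediate iterates, then telescope the fiber composition to obtain a sum of terms weighted by $\lambda^{k-1}$, and finally trade off the geometric fiber decay against the base convergence $d_X(b_k^n(x),b_k^n(x'))\le\mu^{n-k}d_X(x,x')$ via a split-sum/two-scale choice of cutoffs. The only difference is organizational: the paper treats the Cauchy property at $(x_0,y_0)$ and the independence of $x$ as two separate claims with slightly different recursions ($C_j\le\lambda C_{j+1}+B_j$ versus $C_k\le\lambda^{k-1}B_k+C_{k-1}$), whereas you derive a single sensitivity estimate and apply it twice---once with $x'=f_{n+1}\circ\cdots\circ f_{n+m}(x_0)$ to get Cauchy, once with the pair $(x,x_0)$ to get independence of the starting point.
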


\section{Proofs of the Main Theorems}

We first prove Theorem \ref{base lemma}.
\begin{proof}[Proof of Theorem \ref{base lemma}.] Denote 
$$M:=\sup_{n\in\mathbb N}\left\{d(f_n(x_0), x_0)\right\}.$$
Suppose $n,m\in\mathbb N$ with $m>n$. Then, 
\begin{align} \label{Cauchy seq boundary}
    d\left(f_1\circ\cdots \circ f_n(x),f_1\circ\cdots \circ f_m(x)\right)\leq \mu^n d(f_{n+1} \circ \ldots \circ f_m(x_0),x_0),
    \end{align}
and by the triangle inequality, 
    \begin{align*}\notag 
        &\hspace{0.2in}d(f_{n+1} \circ \ldots \circ f_m(x_0),x_0)\\
        &\leq d\left(f_{n+1}(x_0), x_0\right)+\sum_{i=2}^{m-n} d\left(f_{n+1}\circ\cdots \circ f_{n+i}(x_0), f_{n+1}\circ\cdots \circ f_{n+i-1}(x_0)\right) \\  \notag
        &\leq d\left(f_{n+1}(x_0), x_0\right)+\sum_{i=2}^{m-n} \mu^{i-1} d\left(f_{n+i}(x_0), x_0\right)\\ \notag
        &\leq M\sum_{i=1}^{m-n}\mu^{i-1}\leq M\sum_{i=1}^{\infty} \mu^{i-1}.
    \end{align*}
    It follows that 
    $$ d\left(f_1\circ\cdots \circ f_n(x),f_1\circ\cdots \circ f_m(x)\right)\leq \mu^n M \sum_{i=1}^{\infty} \mu^{i-1}\rightarrow 0,$$
    as $n,m\rightarrow\infty$, so that $\left(f_1\circ\cdots\circ f_n(x_0)\right)$ is a Cauchy sequence. Since $X$ is complete, there is a $x^*\in X$ such that 
 $$\lim\limits_{n\rightarrow\infty}f_1\circ \cdots \circ f_n(x_0)=x^*.$$
    If $x\in X$, then 
    $$d\left(f_1\circ\cdots\circ f_n(x), f_1\circ\cdots\circ f_n(x_0)   \right)\leq \mu^nd(x,x_0)\rightarrow 0$$
    as $n\rightarrow \infty$, and hence 
     $$\lim\limits_{n\rightarrow\infty}f_1\circ \cdots \circ f_n(x)=x^*.$$
\end{proof}
\begin{remark}\label{remark on boundedness assumption in base lemma}
    Notice that from the proof, we deduced that the condition that $\left(d(f_n(x_0), x_0)\right)$ is a bounded sequence implies that the set 
    $\{f_k\circ\cdots \circ f_l( x_0)\}_{k\leq l}$ is bounded in $X$, due to the uniform contraction rates of the maps.

    We also note that this condition cannot be removed. As an example, let $X=\mathbb R$ and define $f_n:\mathbb R\rightarrow \mathbb R$ via $f_n(x):=\frac{1}{2} x+ 3^n$. Then, we have that $$f_1\circ\cdots\circ f_n(0)=\sum_{i=1}^n \frac{3^i}{2^{i-1}}\rightarrow \infty$$
    as $n\rightarrow \infty$.
\end{remark}

We now prove Theorem \ref{main theorem}.

\begin{proof}[Proof of Theorem \ref{main theorem}.]
If $\pi_Y:X\times Y\rightarrow Y$ is the projection map $\pi_Y(x,y)=y$, then for each $k\leq n$, we have 
\begin{equation}\label{projected coordinate}
    \pi_Y\circ F_k\circ \cdots \circ F_n(x,y)=h_k^{f_{k+1}\cdots f_nx}\cdots h_n^x(y).
\end{equation}
 From Remark \ref{remark on boundedness assumption in base lemma}, we know there is a $M=M(x_0)>0$ such that
            \begin{equation}\label{base iteration bound}
                d_{X}\left(f_k\circ \cdots \circ f_l (x_0), x_0\right)<M
            \end{equation}
        for all $k\leq l$. Fix $y_0\in Y$. By condition (2), there is an $S>0$ such that 
$$d_Y\left(h_n^{x}(y_0), y_0\right)<S$$
for all $n\in\mathbb N$ and $x\in B_M(x_0)$. From the triangle inequality, (\ref{fiber contraction rate}), and (\ref{base iteration bound}), if $k\leq n$, we have 
\begin{align*}
&d_Y\left(h_k^{f_{k+1}\cdots f_n x_0}\cdots h_n^{x_0}(y_0), y_0\right)\leq d_Y\left( h_k^{f_{k+1}\cdots f_n x_0}(y_0), y_0\right)  \\
&\hspace{2cm}+\sum_{i=2}^{n-k}d_{Y}\left(h_k^{f_{k+1}\cdots f_n x_0}\cdots h_{k+j}^{f_{k+j+1}\cdots f_n x_0}(y_0), h_k^{f_{k+1}\cdots f_n x_0}\cdots h_{k+j-1}^{f_{k+j}\cdots f_n x_0}(y_0)\right)\\
&\leq d_Y\left( h_k^{f_{k+1}\cdots f_n x_0}(y_0), y_0\right)+\sum_{i=2}^{n-k} \lambda^{i-1} d_Y\left( h_{k+j}^{f_{k+j+1}\cdots f_n x_0}(y_0), y_0\right)\leq S\sum_{i=1}^{n-k}\lambda ^{i-1}.  
    \end{align*}
   That is,   
\begin{equation}\label{Upper bound for iterations in Y}
            d_Y\left( h_k^{f_{k+1}\circ\cdots\circ f_n x_0}\circ\cdots\circ h_{n-1}^{f_nx_0}\circ h_n^{x_0}(y_0) ,y_0\right) < L:=S\sum_{i=1}^{\infty} \lambda^{i-1}
\end{equation}
for all $k\leq n$.

    \begin{claim}\label{existence of limit}
        There is a $y^*\in Y$ such that $$\lim\limits_{n\rightarrow\infty}h_1^{f_2\cdots f_n x_0}\circ \cdots \circ h_n^{x_0}(y_0)=y^*.$$
    \end{claim}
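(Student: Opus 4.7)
The plan is to show that $z_n := h_1^{f_2\cdots f_n x_0}\circ\cdots\circ h_n^{x_0}(y_0)$ is a Cauchy sequence in $Y$ and then invoke completeness. Write $x_k^{(n)}:=f_{k+1}\circ\cdots\circ f_n(x_0)$ (with $x_n^{(n)}=x_0$), and, for $m>n$, set $u_n^{(m)}:=h_{n+1}^{x_{n+1}^{(m)}}\circ\cdots\circ h_m^{x_m^{(m)}}(y_0)$, so that
\[ z_m=h_1^{x_1^{(m)}}\circ\cdots\circ h_n^{x_n^{(m)}}\bigl(u_n^{(m)}\bigr). \]
Introducing the intermediate quantity $\tilde z_n:=h_1^{x_1^{(n)}}\circ\cdots\circ h_n^{x_n^{(n)}}(u_n^{(m)})$, the triangle inequality yields $d_Y(z_n,z_m)\le d_Y(z_n,\tilde z_n)+d_Y(\tilde z_n,z_m)$; the first term isolates the \emph{depth} mismatch (different number of composed maps), the second isolates the \emph{basepoint} mismatch (different superscripts).

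The first term is immediate from \eqref{fiber contraction rate} and \eqref{Upper bound for iterations in Y}: the first $n$ fiber maps have composite Lipschitz constant at most $\lambda^n$, so $d_Y(z_n,\tilde z_n)\le\lambda^n d_Y(y_0,u_n^{(m)})\le \lambda^n L$. For the basepoint term I telescope through the $n$ differing superscripts. Defining $A_k:=h_1^{x_1^{(n)}}\circ\cdots\circ h_k^{x_k^{(n)}}\circ h_{k+1}^{x_{k+1}^{(m)}}\circ\cdots\circ h_n^{x_n^{(m)}}(u_n^{(m)})$ (so $A_n=\tilde z_n$ and $A_0=z_m$), the contraction of the outer factors gives
\[ d_Y(\tilde z_n,z_m)\le \sum_{k=1}^{n}\lambda^{k-1}\, d_Y\!\bigl(h_k^{x_k^{(n)}}(w_k),\,h_k^{x_k^{(m)}}(w_k)\bigr), \]
where $w_k:=h_{k+1}^{x_{k+1}^{(m)}}\circ\cdots\circ h_m^{x_m^{(m)}}(y_0)$ lies in $\overline{B_L(y_0)}$ by \eqref{Upper bound for iterations in Y}. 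The Cauchy-type estimate from the proof of Theorem~\ref{base lemma}, applied to the inner compositions $f_{k+1}\circ\cdots\circ f_l$ of uniform Lipschitz constant $\mu$, gives $d_X(x_k^{(n)},x_k^{(m)})\le \mu^{n-k}M$.

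To finish, fix $\varepsilon>0$ and split the sum at a threshold $N$. For the tail $k>N$, condition~(2) applied to the bounded set $\Omega=B_M(x_0)\times B_L(y_0)$ furnishes a uniform bound on $d_Y(h_k^{x}(w),y_0)$, so the tail contributes at most a constant multiple of $\lambda^N/(1-\lambda)$, which is small for $N$ large. For the head $k\le N$, condition~(3) gives, for each $k\le N$, a modulus $\delta_k>0$ providing uniform continuity in the basepoint across $B_L(y_0)$; taking $n$ so large that $\mu^{n-N}M<\min_{k\le N}\delta_k$ renders each of the finitely many head summands arbitrarily small. Combined with $\lambda^n L\to 0$, this shows $(z_n)$ is Cauchy, hence convergent to some $y^*\in Y$. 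The main obstacle is precisely the basepoint mismatch that the telescoping addresses: condition~(2) is essential to confine the inputs $w_k$ to a fixed bounded set on which condition~(3) applies, and the Lipschitz bound on the $f_n$ is what converts basepoint closeness in $X$ into the requisite smallness of $d_Y\bigl(h_k^{x_k^{(n)}}(w_k),h_k^{x_k^{(m)}}(w_k)\bigr)$ in the head sum.
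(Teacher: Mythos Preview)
Your argument is correct and follows essentially the same strategy as the paper: show the sequence is Cauchy by combining the uniform fiber contraction rate $\lambda$, the uniform bound \eqref{Upper bound for iterations in Y} furnished by condition~(2), and condition~(3) applied on a fixed ball $B_L(y_0)$ after choosing a threshold index so that the base points $x_k^{(n)}$ and $x_k^{(m)}$ are $\delta$-close. The only difference is bookkeeping: the paper packages the estimate as a recursion $C_j\le \lambda C_{j+1}+B_j$ (with $B_j<\epsilon$ for $j\le N_0$ and $C_{N_0}\le 2L$), while you first isolate the depth mismatch via the intermediate point $\tilde z_n$ and then telescope the basepoint mismatch into an explicit sum $\sum_{k=1}^n \lambda^{k-1}D_k$ split at $N$; unrolling the paper's recursion yields the same sum structure.
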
  

    \begin{proof}[Proof of Claim \ref{existence of limit}.]
        Let $\epsilon>0$. Choose $N_0$ such that 
        \begin{equation}\label{choice of N_0 claim 1}
            2\lambda^{N_0-1}L<\epsilon.
        \end{equation}
        By condition (3), there is a $\delta>0$ such that if $d_X(x,x')<\delta$, $x,x'\in  B_M(x_0)$, then 
        \begin{equation}\label{Uniform continuity claim 1}
            d_{Y}\left(h_n^{x}(y),h_n^{x'}(y)\right)<\epsilon,
        \end{equation}
        for all $y\in B_L(y_0)$ and $n=1,2,\dots, N_0$. Choose $N_1$ such that 

        \begin{equation}\label{Choice of N_1 in claim 1}
            \mu^{N_1}M<\delta.
        \end{equation}
        Now, suppose $m,n>\tilde N:=N_0+N_1$ with $m>n$. For $j=1,\dots, N_0$, define
        $$A_{j}:=d_{Y}\left(\pi_Y\circ F_j\circ\cdots\circ F_m(x_0,y_0), h_j^{f_{j+1}\cdots f_m(x_0)}\left( \pi_Y\circ F_{j+1}\circ\cdots \circ F_n(x_0,y_0)\right)\right),$$
        $$B_j:=d_Y\left(h_j^{f_{j+1}\cdots f_m(x_0)}\circ \pi_Y \circ F_{j+1}\circ\cdots \circ F_n(x_0,y_0), \pi_Y \circ F_j\circ \cdots \circ F_n(x_0,y_0)\right) ,$$
        and
        $$C_j:=d_Y\left( \pi_Y\circ F_j\circ\cdots\circ F_m(x_0,y_0), \pi_Y\circ F_j\circ \cdots \circ F_n(x_0,y_0)\right).$$
We will show that
$$C_1\leq \left(\text{Constant}\right) \cdot \epsilon $$
for some constant that only depends on $\lambda$. First, we derive relations between the quantities $A_j, \ B_j$ and $C_j$. 

    Notice that from (\ref{projected coordinate}) and (\ref{Upper bound for iterations in Y}), we have
    \begin{equation}\label{Upper bound for C claim 1}
        C_{N_0}\leq 2L
    \end{equation}
and by the triangle inequality, we have
        \begin{equation}\label{C compared to A and B}
            C_j\leq A_j+B_j.
        \end{equation}
        In addition, since 
\begin{align*}
    A_j=d_{Y}\left(h_j^{f_{j+1}\cdots f_m(x_0)}\left(\pi_Y\circ F_{j+1}\circ\cdots\circ F_m(x_0,y_0)\right), h_j^{f_{j+1}\cdots f_m(x_0)}\left( \pi_Y\circ F_{j+1}\circ\cdots \circ F_n(x_0,y_0)\right)\right),
\end{align*}
     by (\ref{fiber contraction rate}), this implies that
        \begin{equation}\label{A compared to C}
            A_j\leq \lambda C_{j+1}.
        \end{equation}
        Also, since $j\leq N_0$, we have that 
        $$n-{j}\geq \tilde N-{N_0}=N_1$$
        so from (\ref{base contraction rate}), (\ref{base iteration bound}), and (\ref{Choice of N_1 in claim 1}), we have 
        $$d_X\left(f_{j+1}\circ\cdots \circ f_m(x_0),f_{j+1}\circ \cdots \circ f_{n}(x_0)\right)\leq \mu^{n-{j}}d_X(f_{n+1}\circ\cdots\circ f_m(x_0),x_0) \leq \mu^{N_1}M<\delta.$$
        Since
        $$B_j=d_Y\left(h_j^{f_{j+1}\cdots f_m(x_0)}\left(\pi_Y \circ F_{j+1}\circ\cdots \circ F_n(x_0,y_0)\right), h_j^{f_{j+1}\cdots f_n(x_0)} \circ\left(\pi_Y \circ F_{j+1}\circ \cdots \circ F_n(x_0,y_0)\right)\right),$$
        in combination with (\ref{Upper bound for iterations in Y}) and (\ref{Uniform continuity claim 1}), this implies that
        \begin{equation}\label{B less than epsilon}
            B_j<\epsilon,
        \end{equation}
for all $j\leq N_0$. By a repeated application of equations (\ref{C compared to A and B}), (\ref{A compared to C}), and (\ref{B less than epsilon}), we have 
        \begin{align*}
            &d_Y\left(\pi_Y\circ F_1\circ\cdots\circ F_m(x_0,y_0), \pi_Y\circ F_1\circ\cdots\circ F_n(x_0,y_0)\right)=C_1\\
            &\leq A_1+B_1\leq \lambda C_2+\epsilon\\
            &\leq \lambda(A_2+B_2)+\epsilon\leq \lambda^2C_3+\lambda\epsilon+\epsilon\\
            &\leq \lambda^2(A_3+B_3)+\lambda\epsilon+\epsilon\leq \lambda^3C_4+\lambda^2\epsilon+\lambda\epsilon+\epsilon\\
            &\leq \dots\leq \lambda^{N_0-1}C_{N_0}+\lambda^{n-2}\epsilon+\cdots+\lambda\epsilon+\epsilon\\
            &\leq 2\lambda^{N_0-1}L+\lambda^{n-2}\epsilon+\cdots+\lambda\epsilon+\epsilon\\
            &\leq \epsilon+\epsilon \sum_{i=0}^{n-2}\lambda^i\leq \left(1+\sum_{i=0}^{\infty}\lambda^i\right)\cdot \epsilon,
        \end{align*}
        where the third to last inequality follows from (\ref{Upper bound for C claim 1}) and the second to last follows from (\ref{choice of N_0 claim 1}). We conclude that $\left(\pi_Y\circ F_1\circ\cdots \circ F_n(x_0,y_0)\right)$ is a Cauchy sequence in $Y$ and hence convergent, since $Y$ is complete.
    \end{proof}
    Denoting the limit of the sequence $\left(\pi_Y\circ F_1\circ\cdots \circ F_n(x_0,y_0)\right)_{n\in\mathbb N}$ by $y^*$, we see that for any $y\in Y$, we have 
    $$d_Y\left(\pi_Y\circ F_1\circ\cdots \circ F_n(x_0,y), \pi_Y\circ F_1\circ\cdots \circ F_n(x_0,y_0)\right)\leq \lambda^nd_Y(y,y_0)\rightarrow 0$$
    as $n\rightarrow \infty$, so that
    \begin{equation}\label{independence of y_0}
        \lim\limits_{n\rightarrow\infty} \pi_Y\circ F_1\circ\cdots \circ F_n(x_0,y)=y^*.
    \end{equation}
    \begin{claim}\label{limit independent of x}
    For each $x\in X$ and $y\in Y$, we have 
$$\lim\limits_{n\rightarrow\infty}\pi_Y\circ F_1\circ\cdots\circ F_n(x,y)=y^*.$$
\end{claim}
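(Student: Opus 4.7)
The plan is to mimic Claim \ref{existence of limit}, now comparing tail iterates starting at $(x,y_0)$ with those starting at $(x_0,y_0)$. Combining the $\lambda$-contraction estimate
$$d_Y\bigl(\pi_Y\circ F_1\circ\cdots\circ F_n(x,y),\,\pi_Y\circ F_1\circ\cdots\circ F_n(x,y_0)\bigr)\leq \lambda^n\, d_Y(y,y_0)\to 0$$
with (\ref{independence of y_0}), it suffices to establish
$$\lim_{n\to\infty} d_Y\bigl(\pi_Y\circ F_1\circ\cdots\circ F_n(x,y_0),\,\pi_Y\circ F_1\circ\cdots\circ F_n(x_0,y_0)\bigr)=0.$$

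I would then set up the necessary uniform bounds. By the triangle inequality and (\ref{base iteration bound}), $d_X(f_k\circ\cdots\circ f_n(x),x_0)\leq d_X(x,x_0)+M$, so both the $x$- and $x_0$-families of base iterates lie in a fixed bounded set of $X$. Condition (2) applied to an $\Omega$ containing this bounded set (at height $y_0$), together with the same telescoping used to derive (\ref{Upper bound for iterations in Y}), yields a constant $L'>0$ bounding $d_Y(\pi_Y\circ F_j\circ\cdots\circ F_n(z,y_0),y_0)$ for $z\in\{x,x_0\}$ and all $j\leq n$. Given $\epsilon>0$, I would choose $N_0$ with $\lambda^{N_0-1}\cdot 2L'<\epsilon$, then use condition (3) with $K=\overline{B_{L'}(y_0)}$ and $n=1,\dots,N_0$ (taking the minimum of finitely many moduli of continuity) to obtain a single $\delta>0$ making $d_Y(h_n^z(y),h_n^{z'}(y))<\epsilon$ whenever $d_X(z,z')<\delta$, for $n\leq N_0$ and $y\in K$. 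Finally I would pick $N_1$ with $\mu^{N_1}d_X(x,x_0)<\delta$, so that for $n\geq N_0+N_1$ and $j\leq N_0$,
$$d_X\bigl(f_{j+1}\circ\cdots\circ f_n(x),\,f_{j+1}\circ\cdots\circ f_n(x_0)\bigr)\leq \mu^{n-j}d_X(x,x_0)<\delta.$$

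Setting $C_j:=d_Y(\pi_Y\circ F_j\circ\cdots\circ F_n(x,y_0),\pi_Y\circ F_j\circ\cdots\circ F_n(x_0,y_0))$, factoring each iterate through its outer $h_j$ and splitting by the triangle inequality gives the recursion $C_j\leq \epsilon+\lambda C_{j+1}$ for $j\leq N_0$, exactly parallel to the $A_j,B_j,C_j$ machinery of Claim \ref{existence of limit}. Iterating and invoking the crude bound $C_{N_0}\leq 2L'$ yields
$$C_1\leq \epsilon\sum_{i=0}^{\infty}\lambda^i+\lambda^{N_0-1}\cdot 2L'<\frac{\epsilon}{1-\lambda}+\epsilon,$$
so $C_1\to 0$ as $n\to\infty$, which together with the reductions above proves the claim. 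The main obstacle is bookkeeping: ensuring that the bounded set on which condition (3) provides uniform continuity actually contains each tail iterate $\pi_Y\circ F_{j+1}\circ\cdots\circ F_n(x,y_0)$ fed into $h_j^{f_{j+1}\cdots f_n(x)}$ versus $h_j^{f_{j+1}\cdots f_n(x_0)}$, and correctly nesting the two scales $N_0$ (fiber decay) and $N_1$ (base closeness) as in Claim \ref{existence of limit}.
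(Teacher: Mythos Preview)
Your argument is correct, and in fact cleaner than the paper's own proof of this claim. Both proofs reduce to showing $d_Y\bigl(\pi_Y\circ F_1\circ\cdots\circ F_n(x,y),\,\pi_Y\circ F_1\circ\cdots\circ F_n(x_0,y_0)\bigr)\to 0$, but the telescoping is organized differently. You peel off the outermost map and run the recursion $C_j\leq \epsilon+\lambda C_{j+1}$ for $j\leq N_0$, terminating with the crude bound $C_{N_0}\leq 2L'$; this is exactly the Claim~\ref{existence of limit} template and needs condition~(3) only for the finitely many indices $j\leq N_0$. The paper instead introduces the hybrid operators $T_k^x:=h_1^{f_2\cdots f_nx}\circ\cdots\circ h_k^{f_{k+1}\cdots f_nx}$ and telescopes by swapping one base superscript at a time from $x$ to $x_0$, obtaining $C_n\leq \sum_{k=1}^n\lambda^{k-1}B_k$; it then splits this sum at $N_0$, using condition~(3) to make $B_k<\epsilon$ for $k\leq N_0$ and a separate uniform bound $B_k\leq S'$ (coming from condition~(2)) for $k>N_0$. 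Your route avoids that extra $S'$ estimate and keeps the two claims structurally parallel; the paper's route makes more visible the contribution of each individual superscript swap but at the cost of heavier notation.
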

\begin{proof}[Proof of Claim \ref{limit independent of x}]
Let $\epsilon > 0$, $x \in X$, $y \in Y$. Set $L':=d_Y(y,y_0)+L$. By property (2), there is a $S'>0$ such that if $d_X\left(x',x''\right)<d_X(x,x_0)$, then 
\begin{equation}\label{bound S' in claim 2}
   d\left(h_n^{x'}(y), h_n^{x''}(y)\right)\leq  S'
\end{equation}
for all $y\in B_{L'}(y_0)$ and $n\in\mathbb N$.
Choose $N_0$ so large such that 
\begin{equation}\label{choice of N_0 claim 2}
    S' \sum^{\infty}_{i =N_0} \lambda^{i} < \epsilon.
\end{equation}
By condition (3), there is a $\delta>0$ such that if $d_X(x',x'')<\delta$, then 
        \begin{equation}\label{Uniform continuity claim 2}
            d_{Y}\left(h_n^{x'}(y),h_n^{x''}(y)\right)<\epsilon,
        \end{equation}
    for all $y\in B_{L'}(y_0)$  and $n=1,2,\dots, N_0$.
Choose $N_1$ so large such that 
\begin{equation}\label{Choice of N_1 claim 2}
    \mu^{N_1}d(x,x_0)<\delta.
\end{equation}
Suppose $n > \tilde N:=N_0 + N_1$. Let us adopt the notation 
$$T_k^x:=h_1^{f_2\cdots f_n x}\circ\cdots \circ h_k^{f_{k+1}\cdots f_n x}$$
so that 
$$\pi_Y\circ F_1\circ\cdots \circ F_n(x,y)=T_{k-1}^{x}\left(\pi_Y\circ F_k\circ\cdots \circ F_n(x,y)\right)$$
for each $k\leq n$. Define
$$A_k:=d_Y\left(T_{k-1}^x(\pi_Y\circ F_{k}\circ \cdots \circ F_n(x,y), T_{k-1}^x\left(\pi_Y\circ F_{k}\circ \cdots \circ F_n(x_0,y) \right) \right)$$
$$B_k:=d_Y\left(h_k^{f_{k+1}\cdots f_nx}\left(\pi_Y\circ F_{k+1}\circ\cdots \circ F_n(x_0,y)\right), h_k^{f_{k+1}\cdots f_nx_0} \left(\pi_Y\circ F_{k+1}\circ\cdots \circ F_n(x_0,y)\right)\right)$$
and
$$C_k := d_Y\left(T_{k}^x\left(\pi_Y\circ F_{k+1}\circ \cdots \circ F_n(x_0,y) \right),T^{x_0}_n(y_0) \right)$$
for $k\leq n$.

We will show that 
$$C_n\leq (\text{Constant}) \cdot \epsilon$$
for some constant that only depends on $\lambda$, but we first establish relations between the quantities $A_k$, $B_k$ and $C_k$.

First notice that by definition of $A_k$ and $B_k$, and (\ref{fiber contraction rate}), we have
\begin{equation}\label{relationship A and B Claim 2}
    A_k \leq \lambda^{k-1}B_k,
\end{equation}
and by the triangle inequality, 
\begin{equation}\label{relationship A and C claim 2}
    C_k \leq A_k + C_{k-1}.
\end{equation}
From (\ref{Upper bound for iterations in Y}), (\ref{fiber contraction rate}) and the triangle inequality, we have
\begin{align*}
 &d_Y(\pi_Y\circ F_{k+1}\circ\cdots \circ F_n(x_0,y),y_0)\\
 &\leq  d_Y(\pi_Y\circ F_{k+1}\circ\cdots \circ F_n(x_0,y),\pi_Y\circ F_{k+1}\circ\cdots \circ F_n(x_0,y_0))\\
 & \hspace{5 cm} +d_Y(\pi_Y\circ F_{k+1}\circ\cdots \circ F_n(x_0,y_0),y_0)\\
 &\leq \lambda^{n-k}d_Y(y,y_0)+d_Y(\pi_Y\circ F_{k+1}\circ\cdots \circ F_n(x_0,y_0),y_0)\\
 &\leq  d_Y(x_0,y_0)+L=L'.
\end{align*}
and also 
$$d_X\left( f_{k+1}\circ\cdots \circ f_n(x), f_{k+1}\circ\cdots \circ f_n(x_0)\right)\leq \mu^{n-k}d(x,x_0)\leq d_X(x,x_0)$$
for all $k\leq n$. 
So, from (\ref{bound S' in claim 2}), we have
\begin{equation}\label{Bound for B claim 2}
    B_k\leq S',
\end{equation}
for all $k\leq n$. Also, for $k \leq N_0$, we have that
        $$n-{k}\geq \tilde N-{N_0}=N_1,$$
        so from (\ref{base contraction rate}) and (\ref{Choice of N_1 claim 2}), we know that 
        $$d_X\left(f_{k+1}\circ\cdots \circ f_n(x_0),f_{k+1}\circ \cdots \circ f_{n}(x)\right)\leq \mu^{n-{k}}d_X(x,x_0)\leq \mu^{N_1}d(x,x_0)<\delta.$$
        Thus, from (\ref{Uniform continuity claim 2}), we have
\begin{align}\label{B less than epsilon claim 2}
B_k \leq \epsilon
\end{align}
for all $k\leq N_0$. Now, using a repeated application of (\ref{relationship A and B Claim 2}) and (\ref{relationship A and C claim 2}), we have
\begin{align*}
    &d_Y\left(\pi_Y\left(F_1\circ\cdots \circ F_n(x,y)\right), \pi_Y(F_1\circ\cdots \circ F_n(x_0,y_0))\right)= C_n\\
    &\leq A_n+C_{n-1}\leq \lambda^{n-1}B_n+A_{n-1} + C_{n-2}  \\
    &\leq \lambda^{n-1}B_n + \lambda^{n-2}B_{n-1} + A_{n-2} + C_{n-3}\\
    &\leq \lambda^{n-1}B_n + \lambda^{n-2}B_{n-1} + \lambda^{n-3}B_{n-2} + A_{n-3}+C_{n-4}\\
    &\leq \dots\leq \lambda^{n-1}B_n+\lambda^{n-2}B_{n-1}+\cdots+\lambda B_2+B_1
\end{align*}
and from (\ref{Bound for B claim 2}), (\ref{B less than epsilon claim 2}) and (\ref{choice of N_0 claim 2}), the last quantity satisfies 
\begin{align*}
    &\sum_{k=1}^{n}\lambda^{k-1}B_k \leq \sum^{N_0}_{k=1}\lambda^{k-1}B_k + \sum^{n}_{k=N_0+1}\lambda^{k-1}B_k \\
    &\leq \epsilon \sum^{N_0}_{k=1}\lambda^{k-1} + \epsilon\leq \left(1+\sum_{k=1}^{\infty}\lambda^{k-1}\right)\cdot \epsilon.
\end{align*}
Therefore, 
$$ \lim_{n \rightarrow\infty}d_Y\left(\pi_Y\left(F_1\circ\cdots \circ F_n(x,y)\right), \pi_Y(F_1\circ\cdots \circ F_n(x_0,y_0))\right) = 0,$$
so from (\ref{independence of y_0}), the claim holds.
\end{proof}
Combining these claims with Lemma \ref{base lemma} implies that there is some $x^*\in X$ such that 
$$\lim\limits_{n\rightarrow\infty}F_1\circ \cdots \circ F_n(x,y)=\left(x^*,y^*\right)$$
for all $(x,y)\in X\times Y$.
\end{proof}

\begin{remark}
   Notice that condition (4) can be reformulated as the assumption that for bounded $K\subset Y$, the family of maps $\left\{x\mapsto h_n^{x}\right\}_{y\in K}$ is uniformly equicontinuous for each $n\in\mathbb N$. This is analogous to the continuity assumption in the Fiber Contraction Theorem.
\end{remark}

We now give examples demonstrating the conditions (2) and (3) cannot be removed in Theorem \ref{main theorem}. Notice that condition (1) cannot be removed by Remark \ref{remark on boundedness assumption in base lemma}.

\begin{example}
    Condition (2) cannot be removed for if we set $X=Y=\mathbb R$, and for all $x,y \in \mathbb{R}$ and $n \in \mathbb{N}$, set $f_n(x)=\frac{1}{2}x$ and $h_n^x(y)=\frac{1}{2}y+3^n$, then 
    $$\lim\limits \pi_Y\circ F_1\circ\cdots\circ F_n(0,0)=\sum_{i=1}^n \frac{3^i}{2^{i-1}}\rightarrow \infty$$
    as $n\rightarrow \infty$.
\end{example}

\begin{example}
    Condition (3) cannot be removed for if we set $X=Y=\mathbb R$, and for all $x,y\in \mathbb R$ and $n\in\mathbb N$, we have $f_n(x)=\frac{1}{2}{x}$ and 
    $$h_n^x(y)=\begin{cases} 
      0 & x=0 \\
      \frac{1}{2}\left(y-\frac{1}{4}\right)+\frac{1}{4} & x\neq 0
   \end{cases},$$
 then 
    $$\lim\limits_{n\rightarrow\infty} F_1\circ\cdots\circ F_n(x,y)=\begin{cases} 
      (0,0) & \text{if} \ x= 0 \\
      \left(0,\frac{1}{4}\right) & \text{if} \ x\neq 0
   \end{cases}.$$
\end{example}

 \section*{Acknowledgements}
We would like to thank Anton Gorodetski for checking the validity of our statements and offering suggestions to improve the quality of the text. The first author was supported by NSF grant DMS-2247966 (PI: A. Gorodetski).

\footnotesize

    \newcommand{\Addresses}{{
  \bigskip
  \footnotesize

  \textsc{Department of Mathematics, University of California, Irvine, CA 92697, USA}\par\nopagebreak
  \textit{E-mail address}: \texttt{lunaar1@uci.edu}\\
  \\

\textsc{Department of Mathematics, University of California, Irvine, CA 92697, USA}\par\nopagebreak
  \textit{E-mail address}: \texttt{weirany1@uci.edu}
}}

\Addresses
\end{document}